\title[Trees]
{Trees are $1$-Transfer }
\author{Salvador Sierra Murillo }
        \email{ssierra.murillo@gmail.com}
         \date{\today}
\DeclareMathAlphabet\EuR{U}{eur}{m}{n}
\SetMathAlphabet\EuR{bold}{U}{eur}{b}{n}
\theoremstyle{plain}
\newtheorem{theorem}{Theorem}[section]
\newtheorem{proposition}[theorem]{Proposition}
\newtheorem{conjecture}[theorem]{Conjecture}
\theoremstyle{definition}
\newtheorem{definition}[theorem]{Definition}
\newtheorem{example}[theorem]{Example}
\newtheorem{remark}[theorem]{Remark}
\global\let\c@equation=\c@theorem}
\newcommand{\calf}{\mathcal{F}}
\newcommand{\calfcyc}{\mathcal{FCY}}
\newcommand{\calfin}{\mathcal{FIN}}
\newcommand{\calvcyc}{\mathcal{VCY}}
\newcommand{\IN}{{\mathbb N}}
\newcommand{\IR}{{\mathbb R}}
\newcommand{\calc}{{\mathcal C}}
\newcommand{\bfK}{{\mathbf K}}
\begin{document}

\begin{abstract}
The K-theoretic Farrell-Jones isomorphism conjecture for a group ring $R[G]$ has been proved for several groups. The toolbox for proving the Farrell-Jones conjecture for a given group depends on some geometric properties of the group as it is the case of hyperbolic groups. The technique used to prove it for hyperbolic groups $G$ relies in the concept of an $N$-transfer space endowed with a $G$ action. In this work, we give an explicit construction of a $1$-transfer space.
\end{abstract}

\maketitle

\section{Introduction}

The present work examine one tool using to prove the $K$-theoretic Farrell-Jones Isomorphism Conjecture. The conjecture itself is not the aim of this work. However, to give our result the right dimension we start with a short review of the formulation and results on the Farrell-Jones conjecture.

Let $G$ be a group. A family of subgroups of $G$ is a non-empty collection $\calf$ of subgroups that is closed under conjugation and taking subgroups.

    \begin{example}[Family of subgroups]
    Let $G$ be a group. Useful examples of families of subgroups of $G$ are the family of finite subgroups $\calfin$, the family of cyclic subgroups $\calfcyc$ and the family of virtually cyclic subgroups $\calvcyc$.
    \end{example}

    \begin{definition}
    A $G$-CW-Complex $E$ is called a classifying space for the family $\calf$ of subgroups of $G$, if $E^H$ (fixed points) is always contractible for all $H\in\calf$ and empty otherwise.
    \end{definition}

It is a well-known result that the for any family $\calf$ always exists a classifying space for the family up to $G$-equivariant homotopy. A model for the classifying space of a family is usually denoted by $E_{\calf}G$.

Following the construction of \cite{MR1659969}, given a ring $R$ and a group $G$, they construct a homology theory for $G$-spaces
    \begin{equation*}
    X \longrightarrow H_{*}^{G}(X;\bfK_R)
    \end{equation*}
with the property $H_{*}^{G}(G/H;\bfK_R)=K_*(R[H])$.

    \begin{definition}[$\calf$-assembly map]
    Let $\calf$ be a family of subgroups of $G$. The projection $E_{\calf}G\twoheadrightarrow G/G$ to the one-point $G$-space induces the $\calf$-assembly map
        \begin{equation*}
        \alpha_{\calf}:H_{*}^{G}(E_{\calf}G;\bfK_R)\rightarrow H_{*}^{G}(G/G;\bfK_R)=K_*(R[G])
        \end{equation*}
    \end{definition}

    \begin{conjecture}[Farrell-Jones]
    For all groups $G$ and all rings $R$, the assembly map $\alpha_{\calvcyc}$ is an isomorphism.
    \end{conjecture}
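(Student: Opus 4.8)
The plan is constrained by the fact that, as worded, this is the Farrell--Jones conjecture in full generality, which remains open: no complete proof is known, and the honest scope of any proposal is therefore a framework together with a precise identification of where it fails. The first move would be to reduce the family $\calvcyc$ to the more tractable family $\calfin$. By the Bass--Heller--Swan decomposition together with the Transitivity Principle, the passage from $\calvcyc$ to $\calfin$ is governed by explicit Nil- and UNil-contributions coming from the infinite virtually cyclic subgroups; these are understood well enough that the task reduces to establishing the statement for the family $\calfin$ modulo these computable correction terms. I would henceforth aim at showing that $\alpha_{\calfin}$ is an isomorphism.

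For $\alpha_{\calfin}$ the engine is the controlled-topology and transfer machinery of Bartels--L\"uck--Reich. The strategy is to exhibit, for the given $G$, a geometric input that forces the assembly map to be simultaneously injective and surjective: a finite-dimensional contractible $G$-CW-complex, a flow space with good asymptotic dynamics, and a supply of long, thin, $G$-equivariant open covers of controlled dimension. Packaged together, these properties say that $G$ is \emph{transfer reducible} over $\calfin$, and the abstract theorem then delivers the isomorphism. This is exactly the point at which the $N$-transfer spaces constructed later in this paper enter: they furnish the fibrewise control needed to move a $K$-theory class supported near the diagonal out to the cover without losing track of its weight, so that a surjectivity or injectivity argument can be completed.

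The hard part --- and the reason the statement is a conjecture rather than a theorem --- is that there is no uniform construction of this geometric input for an arbitrary group. Every known case (hyperbolic groups, $\mathrm{CAT}(0)$ groups, polycyclic groups, lattices in virtually connected Lie groups, mapping class groups, and so on) produces the flow space and the long thin covers by exploiting a specific large-scale geometry that a general $G$ simply does not possess. The realistic form of the plan is therefore conditional: \emph{if} $G$ acts suitably on a finite-dimensional space admitting a flow with the required contraction and compactifiability, then the transfer-space construction of this paper supplies the missing $1$-transfer input and the conjecture follows for that $G$. Extracting such a structure from the bare group-theoretic data of an arbitrary $G$ is precisely the open problem, and I would not expect the present tools to close it.
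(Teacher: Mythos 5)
This statement is the Farrell--Jones conjecture itself, which the paper states as an open conjecture and never proves; there is no proof in the paper to compare against, since the paper uses the conjecture only as motivation for constructing a $1$-transfer space. Your proposal correctly recognizes this: declining to claim a proof, sketching the known conditional machinery (reduction from $\calvcyc$ to $\calfin$ via the Transitivity Principle and Nil/UNil terms, transfer reducibility in the sense of Bartels--L\"uck--Reich, i.e.\ Theorem A quoted in the paper), and pinpointing the absence of a uniform geometric input for arbitrary groups as the genuine obstruction is exactly the honest and accurate assessment, fully consistent with the paper's own framing.
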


The Farrell-Jones conjecture happens to be true for a large class of groups. Examples for which the conjecture is true are the family of Hyperbolic groups \cite{Bartels2008} and Fundamental groups of graphs of Virtually Cyclic groups \cite{Wu2016}. Also, all of them illustrate how diverse the techniques to prove this conjecture are.

While it is still an open conjecture and the proves does not show a recognizable pattern some attempts to find general arguments has been done. On Proofs of the Farrell–Jones Conjecture \cite{10.1007/978-3-319-43674-6_1} the statement of Theorem A has this objective. The statement is

    \begin{theorem}[Theorem A, \cite{10.1007/978-3-319-43674-6_1}] 
    Suppose $G$ is finitely generated by $S$. Let $\calf$ be family of subgroups of $G$. Assume that there is $N\in\IN$ such that for any $\epsilon>0$ there are
        \begin{enumerate}
            \item [(a)] an $N$-transfer space $X$ equipped with a $G$-action,
            \item [(b)] a simplicial $(G,\calf)$-complex $E$ of dimension at most $N$,
            \item [(c)] a map $f:X\to E$ that is $G$-equivariant up to $\epsilon$: $d'(f(sx),sf(x))\leq\epsilon$ for all $s\in S$, $x\in X$.
        \end{enumerate}
    Then $\alpha_{\calf}:H_{*}^{G}(E_{\calf}G;K_R)\rightarrow K_*(R[G])$ is an isomorphism
    \end{theorem}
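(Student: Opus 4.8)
The plan is to translate the statement into the language of controlled algebra and then to exhibit $\alpha_{\calf}$ as an isomorphism by proving that an associated obstruction term vanishes. The first step (the \emph{reduction}) is the standard reformulation: $\alpha_{\calf}$ is an isomorphism if and only if the (nonconnective) $K$-theory of a certain quotient category of geometric $R[G]$-modules vanishes in every degree. The objects of this category are controlled over the open cone on a suitable $G$-space, morphisms are required to become arbitrarily small near infinity, and the subgroups in $\calf$ are declared negligible. This turns the problem into the purely algebraic task of producing enough controlled self-equivalences of the obstruction category to force its $K$-theory to be zero.

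For the second step (the \emph{transfer}) I would use hypothesis (a), the $N$-transfer space $X$ with its $G$-action, to build a transfer functor. The defining property of an $N$-transfer space is that the identity of $X$ factors, up to arbitrarily fine controlled homotopy, through $N$-dimensional simplicial complexes; integrating this structure against the module category yields a functor $\mathrm{tr}$ from the obstruction category into a thickened version controlled simultaneously over $G$ and over $X$. The contractibility and domination built into the transfer space guarantee that the composite of $\mathrm{tr}$ with the forgetful functor induces the identity on controlled $K$-theory up to the Euler characteristic, which is a unit. This is exactly the step that earns the name \emph{transfer} and that will later force the identity on the obstruction to be idempotent.

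The crucial use of hypotheses (b) and (c) comes in the third step (\emph{moving the support}). Pushing the transferred controlled modules forward along the map $f : X \to E$ relocates all control into the finite-dimensional simplicial $(G,\calf)$-complex $E$; because every cell of $E$ has stabilizer in $\calf$ and $\dim E \leq N$, objects supported near $E$ become negligible in the relevant relative $K$-theory, and the uniform bound $N$ permits a dimension-controlled Eilenberg swindle. The subtlety is that $f$ is only $G$-equivariant up to $\epsilon$, so the $G$-control and the $E$-control do not match on the nose; one must show that the discrepancy $d'(f(sx), s f(x)) \leq \epsilon$ can be entirely absorbed into the allotted control budget. Here finite generation is essential: since $G = \langle S \rangle$, the equivariance defect must be controlled only for the finitely many $s \in S$, so choosing $\epsilon$ small relative to these finitely many control constants and passing to the colimit as $\epsilon \to 0$ makes the absorption legitimate.

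Finally I would assemble the pieces: the transfer composite is simultaneously the identity on the obstruction $K$-theory (by the transfer step) and factors through a category that is $K$-theoretically trivial (by the support-moving step), forcing the identity to be zero and hence the obstruction groups to vanish; by the reduction this is equivalent to $\alpha_{\calf}$ being an isomorphism. I expect the main obstacle to be precisely the reconciliation in the third step: quantitatively tracking how the $\epsilon$-failure of equivariance of $f$ propagates through $\mathrm{tr}$, and verifying that a single $N$ works uniformly as $\epsilon \to 0$, so that the swindle remains valid $G$-equivariantly rather than merely after forgetting the action.
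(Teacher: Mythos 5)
There is nothing in the paper to compare your proposal against: Theorem A is not proved here. It is imported verbatim from Bartels' survey \cite{10.1007/978-3-319-43674-6_1} (ultimately from Bartels--L\"uck--Reich), and the paper's own contribution is downstream of it, namely the explicit verification that the compactified tree $\overline{T}$ is a $1$-transfer space, i.e.\ an instance of hypothesis (a). So the honest assessment is of your sketch on its own terms, measured against the known proof in the literature.

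As an outline, your four steps (reduction to the vanishing of the $K$-theory of an obstruction category of controlled modules; a transfer functor built from the $N$-transfer space $X$, with contractibility forcing the composite with the forgetful functor to act as the identity; pushing control forward along $f$ into the $N$-dimensional $(G,\calf)$-complex $E$, absorbing the $\epsilon$-failure of equivariance using finite generation of $G$ by $S$; and the final ``identity factors through zero'' conclusion) do reproduce the actual architecture of the argument. But it is a sketch, not a proof: each step you describe is itself a substantial theorem. In particular, (i) the reduction to the obstruction category requires setting up the equivariant continuous-control categories and identifying their $K$-theory with the relative term of the assembly map; (ii) the assertion that the transfer composite induces the identity on $K$-theory is exactly where the chain-level structure of the $N$-transfer property (the maps $i$, $p$ and the $\delta$-controlled homotopy $H$) must be fed into a controlled chain-homotopy argument, and ``up to the Euler characteristic, which is a unit'' hides the point that contractibility is needed to make that characteristic equal to $1$ rather than merely invertible; and (iii) the vanishing over $E$ is not an Eilenberg swindle but an induction on the dimension $N$ combined with a squeezing/stability theorem for the controlled categories over simplices with stabilizers in $\calf$ --- this is where the hypothesis $\dim E\leq N$ with $N$ independent of $\epsilon$ is genuinely used, and your text correctly flags but does not resolve the quantitative bookkeeping of how the $\epsilon$-discrepancy propagates. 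None of these gaps is a wrong turn; they are simply the content of the cited theorem, which neither you nor the paper under review actually supplies.
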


A simplicial $(G,\calf)$-complex a simplicial complex $E$ with a simplicial $G$-action whose isotropy groups $G_x=\{g\in G|gx=x\}$ belongs to $\calf$ for all $x\in E$.

    \begin{remark}
    Theorem A applies to Hyperbolic groups and $\calf=\calvcyc$. the family of virtually cyclic sugroups. This theorem is a minor formulation of [Bartels-Lueck-Reich] work.
    \end{remark}

The concept of $N$-transfer is our main concern. In [Bar], an example of a $1$-transfer space is given. Namely, the compactification $\overline{T}$ of a locally finite simplicial tree $T$ by geodesic rays. The result is elementary but by no means trivial. There are no explicit prove of this result and the author provides a proof of this fact.

\section{Preliminaries}

In this section, we give the necessary definitions to establish our result.

    \begin{definition}[$N$-transfer]
    An $N$-transfer space $X$ is a compact contractible metric space such that the following holds.
    For any $\delta>0$ there exists a simplicial complex $K$ of dimension at most $N\in\IN$, continuous maps $i:X\to K$, $p:K\to X$ and homotopy $H:p\cdot i\to Id_X$ such that for any $x\in X$, $\text{diam}\{H(t,x)|t\in[0,1]\}\leq\delta$.
    \end{definition}

The definition of an $N$-transfer recall us that of a dominated complex. The difference is the control requirement on the diameter being less or equal to $\delta$.

    \begin{definition}[Abstract simplicial complex]
    An abstract simplicial complex $K$ consists of a non-empty set $V$ of vertices and a collection $\mathcal{S}$ of non-empty subsets of $V$ such that
        \begin{itemize}
            \item for every $v\in V$, $\{v\}\in\mathcal{S}$,
            \item if $S\in\mathcal{S}$, then $\emptyset\neq T\subseteq\mathcal{S}$ is in $\mathcal{S}$. 
        \end{itemize}
    \end{definition}

    \begin{remark}
    We call the elements of $\mathcal{S}$ the simplices of $K$. An element $S\in\mathcal{S}$ is an $n$-simplex if $|S|=n+1$ and we set $dim(S)=n$. The single elements of $n$-simplex $S$ are called vertices and each proper subset of $S$ is a face.
    \end{remark}
    
    \begin{definition}[Simplicial Tree]
    A simplicial tree $T$ is a connected simply connected $1$-simplex.
    \end{definition}
    
    \begin{remark}
    Working with an abstract simplicial complex whose simplex are $1$-simplex might be undue. Instead, we use its geometric realization. Every statement we made on the geometric realization of $T$ is valid for the abstract simplicial tree.
    \end{remark}

    \begin{definition}[Simplicial Metric]
    A metric $d$ on a simplicial complex $K$ is called simplicial if
        \begin{itemize}
            \item [(a)] the restriction, $d_i=d|_{S_i}$, to each simplex $S_i$ is euclidean,
            \item [(b)] $d$ is maximal for the condition $d_i=d|_{S_i}$ for each $S_i$.
        \end{itemize}
    \end{definition}

    \begin{figure}[H]
        \centering
        \includegraphics[scale=0.3]{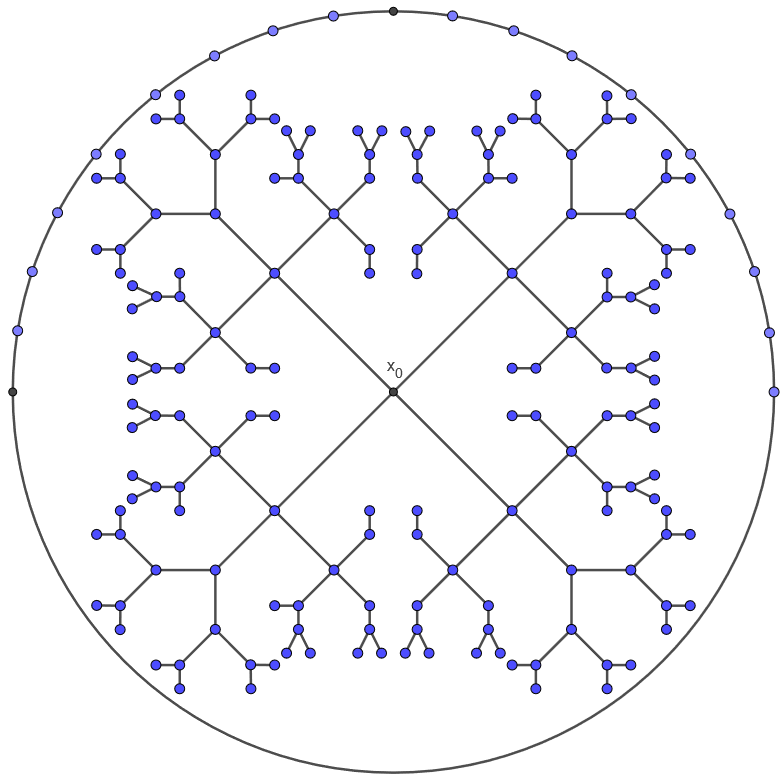}
        \caption{Example of a geometric realization $T$ of a Tree and its compactification $\overline{T}$}
        \label{figure1}
    \end{figure}

We give a simplicial metric to a tree $T$ as follows.

Let $x_0\in T$ be an arbitrary fixed vertex of $T$. A vertex $y\in T$ is adjacent to $x_0$ if it is joined with $x_0$ by just one edge. Give length $1/2$ to all edges joining adjacent $y$ to $x_0$. For the next step, consider all the adjacent vertices to $x_0$ and give length $1/4$ to all edges joining adjacent vertices to $y$ but that joining $x_0$ and $y$ which has already length $1/2$.

In general, an edge between $x_1$ and $x_2$ adjacent vertices has length $1/2^{n}$ if the (minimum) number of vertices from $x_1$ to $x_0$ is $n-1$ and the number of vertices from $x_2$ to $x_0$ is $n$. 

Our goal is to define a metric on the geometric realization of $T$. Call a pair of vertices $x,y\in T$ adjacent if they are joined by just one edge. We now give each edge on the geometric realization of $T$ a length.

    \begin{enumerate}
        \item Let $x_0\in T$ be an arbitrary but fixed vertex.
        \item The length of an edge joining $x_0$ with an adjacent vertex $y$ is $1/2$.
        \item Recursively, an edge joining $y$, adjacent to $x_0$, with any other adjacent vertex different from $x_0$ has length $1/2^2$.
        \item Repeat steps (ii) and (iii).
    \end{enumerate}
    
In general, a vertex $x\in T$ can be reached from $x_0$ by a minimum sequence of vertices $x_1,x_2,...,x_n$ with $x_i$ adjacent to $x_{i+1}$ and $x_n$ adjacent to $x$. By definition of lengths it is easy to see that the edge joining $x$ with $x_n$ has length $1/2^{n+1}$.

Defining the length of each edge as we did induces a metric $d$ on $T$. Indeed, this metric makes $T$ into a geodesic metric space and, by definition of a tree, into a uniquely geodesic metric space. We denote this metric space by $(T,d)$.

    \begin{proposition}\label{gromov}
    The metric tree $(T,d)$ is an $\IR$-tree. Moreover, (T,d) is a $CAT(k)$-spaces for every $k\in\IR$.
    \end{proposition}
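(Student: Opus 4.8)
The plan is to split the statement into its two assertions and prove them in order: first that $(T,d)$ is an $\IR$-tree, and then that every $\IR$-tree is $CAT(k)$ for all $k\in\IR$, applying this to $(T,d)$. Throughout I would lean on what the preceding paragraph already records, namely that the length metric makes $(T,d)$ a geodesic and \emph{uniquely} geodesic space; this is the starting point, not the conclusion, since being uniquely geodesic alone does not force the tree property (for instance the Euclidean plane is uniquely geodesic but is not an $\IR$-tree).

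For the first assertion I would upgrade ``uniquely geodesic'' to ``$\IR$-tree'' by verifying that every geodesic triangle is a tripod, i.e. that $(T,d)$ is $0$-hyperbolic. The key geometric input is that $T$, as a simplicial tree, is simply connected and contains no embedded cycle. Hence in the geometric realization the unique arc joining two points coincides with the geodesic $[x,y]$, and for any three points $x,y,z$ the three geodesics meet in a single median point $m$ satisfying $[x,y]\cap[x,z]=[x,m]$ and the symmetric identities at $y$ and $z$. This is exactly the tripod (four-point) condition, which together with unique geodesicity characterizes an $\IR$-tree. I expect the extraction of the median $m$ to be the first delicate point, since one must pass carefully from the combinatorial statement about reduced edge paths between vertices to the metric statement about arcs between arbitrary points of the realization.

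For the second assertion I would use the standard reduction of the $CAT(k)$ inequality to a comparison between a vertex and a point on the opposite side: it suffices to show that for every geodesic triangle $\triangle(p,q,r)$ and every $x$ on the side opposite $p$ one has $d(p,x)\le d_{M_k}(\bar p,\bar x)$, where $\bar p,\bar x$ are the comparison points in the model space $M_k$. By the first part such a triangle is a tripod with center $m$; writing $a=d(p,m)$, $b=d(q,m)$, $c=d(r,m)$, the side lengths are $a+b$, $b+c$, $a+c$, and for $x\in[q,r]$ with $d(m,x)=u$ one has $d(p,x)=a+u$. In the comparison triangle $\bar p\bar q\bar r\subset M_k$ the comparison point satisfies $d_{M_k}(\bar r,\bar x)=c-u$, so the triangle inequality in $M_k$ applied to $\bar p,\bar r,\bar x$ gives $d_{M_k}(\bar p,\bar x)\ge d_{M_k}(\bar p,\bar r)-d_{M_k}(\bar r,\bar x)=(a+c)-(c-u)=a+u=d(p,x)$, which is the desired inequality. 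Since this estimate uses only the triangle inequality, it is valid in every model space $M_k$ and does not depend on the sign of $k$.

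Finally I would address $k>0$ only to confirm the admissibility hypotheses: there the $CAT(k)$ condition concerns geodesics between points at distance less than $\pi/\sqrt{k}$ and triangles of perimeter less than $2\pi/\sqrt{k}$, and since $(T,d)$ is genuinely geodesic these hypotheses are met, while the tripod estimate above applies verbatim to every admissible triangle. This yields $CAT(k)$ for all $k\in\IR$. The main obstacle I anticipate is not any single computation but assembling the tripod picture cleanly in the first part; once geodesic triangles are known to be tripods, the $CAT(k)$ inequality for every $k$ follows uniformly from the elementary estimate above.
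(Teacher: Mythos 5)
Your argument is correct, but it is worth noting that it is genuinely more than what the paper does: the paper offers no proof of Proposition \ref{gromov} at all, only a deferral to Gromov and to the $CAT(0)$ literature, whereas you supply the standard self-contained argument. Your decomposition --- first upgrade ``uniquely geodesic'' to ``every geodesic triangle is a tripod'' using the absence of embedded cycles in the realization and the existence of a median point $m$, then deduce $CAT(k)$ for all $k$ from the tripod picture via the reduction to a vertex--opposite-side comparison and the reverse triangle inequality in $M_k$ --- is exactly the textbook route (essentially Bridson--Haefliger II.1.15), and your computation $d_{M_k}(\bar p,\bar x)\ge d_{M_k}(\bar p,\bar r)-d_{M_k}(\bar r,\bar x)=a+u=d(p,x)$ checks out; the only cosmetic omission is that you treat the branch of $[q,r]$ on the $r$-side of $m$ and should say the $q$-side case is symmetric (compare against $\bar q$ instead of $\bar r$). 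What your approach buys is transparency: it makes explicit that the conclusion depends only on the tree being a length space with unique arcs, not on the particular edge lengths $1/2^{n}$ chosen in the paper, and it correctly isolates the one genuinely delicate step (passing from combinatorial reduced edge paths to metric arcs in the realization to extract the median). What the paper's citation-only approach buys is brevity and an appeal to authoritative sources, at the cost of leaving the reader to reconstruct precisely the argument you give. Your handling of the admissibility constraints for $k>0$ (perimeter bounded by $2\pi/\sqrt{k}$) is also correct, and in fact here the whole space has diameter at most $2$, so the constraint is vacuous for small $k$ and harmless in general.
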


The proof of Proposition \ref{gromov} and precise definition of $\IR$-tree are in Gromov's work \cite{Gromov1987}. For us it is enough to consider $(T,d)$ as $CAT(0)$-space because the compactification we consider depends only on the properties of $CAT(0)$-spaces. We refer the reader to (\cite{Bartels2008} Chap II. 8) for a detailed exposition of this proof.

    \begin{definition}[Gromov Product]\label{Grom}
    Let $(T,d)$ be a metric space with a distinguished point $x_0$ and $d(x)$ denote $d(x,x_0)$. The Gromov product is
        \begin{equation*}
            (x|y)=\frac{1}{2}(d(x)+d(y)-d(x,y))
        \end{equation*}
    \end{definition}

We can think of the Gromov product as a way to measure the distance from $x_0$ to the vertex joining $x$ to $y$ in $T$. It is straightforward to see that $(T,d)$ is a $0$-hyperbolic space, that is, for every $x,y,z\in T$ and fixed $x_0$ we have

    \begin{equation*}
        (x|y)\geq min\{(x|z),(z|y)\}.
    \end{equation*}

    \begin{definition}[Geodesic ray and convergence]
    A geodesic ray $C:[0,\infty]\rightarrow T$ (based at $x_0$) is a sequence $\{x_i\}$ of vertices such that each of them belongs to the image of $C$. Let $\{x_i\}$ be a sequence of points in $(T,d)$. We say that $\{x_i\}\rightarrow\infty$ converges to the infinity if $(x_i|x_j)\rightarrow 1$ for $i,j\rightarrow\infty$ 
    \end{definition}

Consider the set $\calc$ of all sequences that converges to the infinity in a simplicial metric tree. A pair $\{x_i\}$, $\{y_i\}$ in $\calc$ is equivalent if

    \begin{equation}\label{rel}
        \lim_{i,j\to\infty}inf\{(x_i|y_j)\}=1
    \end{equation}

This equivalence defines an equivalence relation in $\calc$ since $(T,d)$ is $0$-hyperbolic.

    \begin{definition}[The boundary $\partial T$]
    The hyperbolic boundary $\partial T$ of a tree $(T,d)$ is the set of equivalent classes of $\calc$ modulo the relation \ref{rel}.
    \end{definition}

We can think of two equivalent rays a pair of rays having in common a long sequence of vertex and just differing by a small geodesic segment.
    
By construction of $(T,d)$ with $x_0$ fix, any element $\chi\in\partial T$ has a unique sequence issuing from $x_0$. Write $x_i\to\chi$ for this sequence.

Now we extend Gromov's product to the boundary $\partial T$ using the ideas in \cite{1990} and \cite{doi:10.1112/jlms/54.1.50} defining 
    \begin{equation}\label{rel2}
    (\chi,\chi')=sup\lim_{i,j\to\infty}\{(x_i|y_j)\}
    \end{equation}
where $x_i\to\chi$ and $y_j\to\chi'$ are the unique sequences on their respective equivalence class.

Immediate properties of \ref{rel2} are
    \begin{itemize}
        \item[(1)] For all $\chi,\chi'\in\partial T$, $(\chi|\chi')=1$ if and only if $\chi=\chi'$.
        \item[(2)] For all $\chi,\chi'$, $(\chi,\chi')=(\chi',\chi)$.
        \item[(3)] For all $\chi,\chi',\chi^{''}$, $(\chi,\chi')\geq min\{(\chi,\chi^{''})(\chi^{''},\chi')\}$.
    \end{itemize}
The properties enlisted above are valid for all points in $T$, except for the first property valid only for points in the boundary of $T$. Let $\overline{T}=T\cup\partial T$.

In Figure \ref{figure1} we illustrate a geometric realization of $\overline{T}=T\bigcup\partial T$. We point out the tree $T$ in the circle with center $x_0$. The boundary $\partial T$ is then the circumference. We only highlight points in $\partial T$ in the upper half of the circumference.

Our goal is now to define a distance in $\partial T$ extending $d$ to all $\overline{T}$.

Let $x_i\rightarrow\chi$ and $y_j\rightarrow\chi'$ points in $\partial T$ and $z_{\chi}^{\chi'}$ denote the common vertex of $\{x_i\}$ and $\{y_j\}$ at distance $(\chi|\chi')$ from $x_0$. Denote by $C_{\chi}$ and $C_{\chi'}$ the truncated rays obtained from $\{x_i\}$ and $\{y_j\}$ starting at the common vertex  $z_{\chi}^{\chi'}$ going to $\partial T$ ignoring the previous finite sets. Let $l(C_{\chi})$ and $l(C_{\chi'})$ denote the lengths of the respective rays and define
    \begin{equation*}
    d(\chi,\chi')=l(C_{\chi})+l(C_{\chi'})
    \end{equation*}
this is $d(\chi,\chi')=2(1-(\chi|\chi')$,the projection over a convex set given in \cite{Bartels2008}.

    \begin{figure}[H]
        \centering
        \includegraphics[scale=0.4]{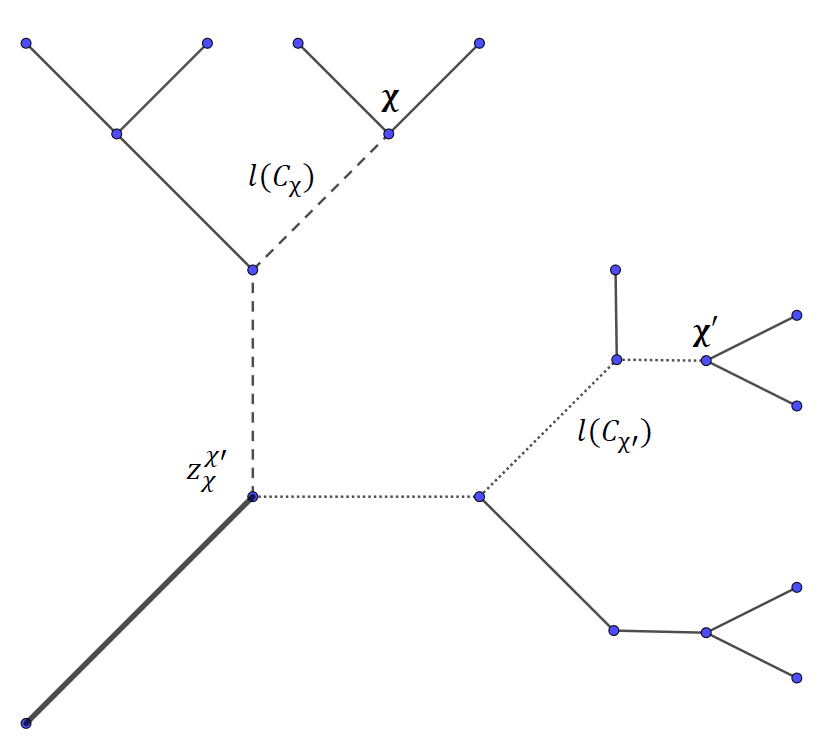}
        \caption{Extended metric on $\overline{T}=T\bigcup\partial T$ using Gromov's product}
        \label{figure2}
    \end{figure}

In Figure \ref{figure2} we illustrate the Gromov product $(\chi|\chi')$ of two finite rays. The point $z_{\chi}^{\chi'}$ marks what vertices are common. In this example, if assume that vertex adjacent to $z_{\chi}^{\chi'}$ has length $1/2^n$, then $l(C_{\chi})$ has length $1/2^n+1/2^{n+1}$ and $l(C_{\chi'})$ has length $1/2^n+1/2^{n+1}+1/2^{n+2}$. Thus $d(\chi,\chi')=l(C_{\chi})+l(C_{\chi'})=1/2^{n-1}+1/2^n+1/2^{n+2}$.

    \begin{proposition}
    The definition $(\partial T,d)$ given above satisfies
        \begin{enumerate}
            \item $d(\chi,\chi')=0$ if and only if $(\chi|\chi')=1$ if and only if $\chi=\chi'$
            \item $d(\chi,\chi')\geq0$ and $d(\chi,\chi')=d(\chi',\chi)$
            \item $d(\chi,\chi')\leq d(\chi,\chi^{''})+d(\chi^{''},\chi')$
            \item for all $\chi\in\partial T$, $d(\chi,x_0)=1$
            \item $d(\chi,\chi')=2$ if $\chi$, $\chi'$ belong to different connected component of $T\setminus\{x_0\}$
        \end{enumerate}
    \end{proposition}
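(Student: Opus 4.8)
The plan is to reduce the whole proposition to the single algebraic identity $d(\chi,\chi')=2\bigl(1-(\chi|\chi')\bigr)$ together with properties (1)--(3) of the extended Gromov product already recorded above. First I would isolate the one genuinely geometric input: every geodesic ray issuing from $x_0$ has total length $1$. Such a ray crosses exactly one edge at each combinatorial level $n$, and that edge has length $1/2^{n}$ by construction, so its total length is $\sum_{n\geq 1}1/2^{n}=1$. This computation immediately settles part (iv), since $d(\chi,x_0)$ is by definition the length of the ray from $x_0$ to $\chi$.

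Next I would justify the identity $d(\chi,\chi')=2\bigl(1-(\chi|\chi')\bigr)$. The branch point $z_{\chi}^{\chi'}$ sits at distance $(\chi|\chi')$ from $x_0$ along the common initial segment of the two rays, so by the length-$1$ fact each truncated ray $C_{\chi}$, $C_{\chi'}$ has length $1-(\chi|\chi')$, and adding them gives the formula. With this in hand, parts (i)--(iii) become arithmetic on the Gromov product. For (i), $d(\chi,\chi')=0$ iff $(\chi|\chi')=1$ is immediate from the formula, and $(\chi|\chi')=1$ iff $\chi=\chi'$ is property (1). For (ii), nonnegativity holds because the common segment cannot exceed the full ray, so $(\chi|\chi')\leq 1$, while symmetry is property (2). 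For the triangle inequality (iii) I would prove the stronger ultrametric bound: property (3) gives $(\chi|\chi')\geq\min\{(\chi|\chi''),(\chi''|\chi')\}$, whence $1-(\chi|\chi')\leq\max\{1-(\chi|\chi''),\,1-(\chi''|\chi')\}$ and therefore $d(\chi,\chi')\leq\max\{d(\chi,\chi''),d(\chi'',\chi')\}$, which is bounded above by the sum because both terms are nonnegative by (ii).

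Finally, for (v), if $\chi$ and $\chi'$ lie in different connected components of $T\setminus\{x_0\}$, then their rays from $x_0$ share no edge, so the branch point is $x_0$ itself and $(\chi|\chi')=0$; the formula then yields $d(\chi,\chi')=2$.

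The step I expect to demand the most care is establishing the identity $d(\chi,\chi')=2\bigl(1-(\chi|\chi')\bigr)$ rigorously, because the bounded metric makes it less transparent than in the classical Gromov-boundary setting, where products tend to $\infty$ rather than to $1$. Concretely, one must check that the branch point $z_{\chi}^{\chi'}$ is a well-defined point of the tree located at distance exactly $(\chi|\chi')$ from $x_0$, and that the truncated ray from it to the boundary has length exactly $1-(\chi|\chi')$. Both facts rest on the $\IR$-tree, hence $CAT(0)$, structure supplied by Proposition \ref{gromov} and on the uniqueness of the ray representing each boundary point; once these are pinned down, the remaining deductions are the short computations above.
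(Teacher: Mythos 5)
The paper states this proposition without any proof at all --- the only hint it offers is the passing remark, immediately before the statement, that $d(\chi,\chi')=2(1-(\chi|\chi'))$. So there is no argument of the paper's to compare yours against: what you have written fills a genuine omission rather than paralleling an existing proof. Your reduction is the natural one. The single geometric input is that every geodesic ray issuing from $x_0$ crosses one edge of length $2^{-n}$ at each level $n$ and hence has total length $\sum_{n\geq1}2^{-n}=1$; this gives (iv) directly, and it gives the identity $d(\chi,\chi')=2\bigl(1-(\chi|\chi')\bigr)$ because each truncated ray $C_{\chi}$ then has length $1-(\chi|\chi')$. After that, (i), (ii), (iii) and (v) are arithmetic consequences of the listed properties of the extended Gromov product, and your observation that (iii) holds in the stronger ultrametric form is correct and worth recording, since it is exactly the $0$-hyperbolicity of the tree showing through. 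Two caveats, both inherited from the paper rather than introduced by you: the equivalence $(\chi|\chi')=1$ if and only if $\chi=\chi'$ used in (i) is itself only asserted in the text as property (1) of the extended product, so your part (i) is only as solid as that claim; and the assertion that $z_{\chi}^{\chi'}$ sits at distance exactly $(\chi|\chi')$ from $x_0$ deserves the one-line verification you flag --- in a tree, $(x_i|y_j)$ stabilises at the distance from $x_0$ to the branch point as soon as $x_i$ and $y_j$ lie beyond it, so the $\sup\lim$ in the definition is attained at that value. With those two points spelled out, your proof is complete and could be inserted where the paper currently has none.
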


    \begin{remark}
    Formally, we have defined $d$ only for points in the boundary. However, $d(\chi,x_0)$ is defined in the same way we defined $d$ on $\partial T$ but consider finite sequences for points in $T$.
    \end{remark}

\section{Trees are 1-Transfer}

In this section we prove that the space $\overline{T}$ is a $1$-transfer. 

    \begin{proposition}
    The space $\overline{T}$ given the cone topology \cite{Bartels2008} is compact and compatible with $(\overline{T},d)$ given as before.
    \end{proposition}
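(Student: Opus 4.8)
The plan is to work entirely with the metric $(\overline{T},d)$, to establish compactness directly from the tree structure, and only afterwards to identify the metric topology with the cone topology. The starting observation is that the geometric decay of the edge lengths forces every geodesic ray issuing from $x_0$ to have finite total length $\sum_{n\ge 1}2^{-n}=1$; hence $d(\chi,x_0)=1$ for every $\chi\in\partial T$ and $\operatorname{diam}(\overline{T})\le 2$, consistently with the properties recorded above. I would emphasize at the outset that $(T,d)$ is \emph{not} proper: when $T$ branches infinitely the closed unit ball contains infinitely many vertices, so the standard statement ``proper $CAT(0)$ space $\Rightarrow$ compact visual boundary'' does not apply, and compactness must instead be extracted by hand from the geometry of $T$.

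First I would prove that $(\overline{T},d)$ is compact by establishing sequential compactness through a pigeonhole argument of K\"onig type. Given a sequence $(p_k)$ in $\overline{T}$, associate to each $p_k$ the geodesic from $x_0$ towards it (the segment $[x_0,p_k]$ if $p_k\in T$, the ray to $\chi$ if $p_k=\chi\in\partial T$). Local finiteness makes the number of edges at each level finite, so repeated pigeonhole extracts a diagonal subsequence whose associated geodesics share an ever longer common initial prefix. If the prefixes stabilize the subsequence converges to a point of $T$; otherwise they determine a ray and the subsequence converges to the corresponding boundary point. In either case the tail bound $d(p_k,\text{limit})\le 2^{-n}$, valid once the first $n$ levels agree, yields metric convergence. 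Equivalently one may argue that $\overline{T}$ is the metric completion of $(T,d)$ and is totally bounded, the finite set of vertices of level at most $n$ being a $2^{-n}$-net, so that completeness together with total boundedness gives compactness.

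Next I would show that the cone topology coincides with the metric topology, so that compactness transfers. The essential tool is the explicit formula $d(\chi,\chi')=2\bigl(1-(\chi|\chi')\bigr)$ recorded above, together with its analogue for a boundary point and a point of $T$. In a tree, two geodesics from $x_0$ coincide up to their branch vertex and then separate permanently, so a basic cone neighborhood of $\xi\in\partial T$ --- the points whose geodesic stays close to $[x_0,\xi)$ out to radius $r$ --- is exactly the set of points sharing the prefix of $[x_0,\xi)$ beyond that radius, and the Gromov product $(\xi|\,\cdot\,)$ measures precisely the depth of this common prefix. Hence every cone neighborhood is a metric ball and conversely, and the two neighborhood bases match. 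Since the metric topology is already compact and the cone topology is Hausdorff, it then suffices to verify continuity of the identity map in one direction, a continuous bijection from a compact space to a Hausdorff space being automatically a homeomorphism.

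The main obstacle, and the point where the argument departs from the standard $CAT(0)$ reference, is precisely the failure of properness. The usual compactness results for visual boundaries assume that closed balls are compact, which fails here, so the whole weight of the proof rests on exploiting the summability $\sum 2^{-n}<\infty$ of the edge lengths to recover total boundedness, and on verifying, by the direct neighborhood comparison above rather than by invoking the proper case, that the cone topology remains Hausdorff and agrees with $d$.
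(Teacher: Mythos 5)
Your proposal is correct, and it is substantially more self-contained than the paper's argument, which consists of the single observation that the neighbourhood basis of the cone topology coincides with a basis of metric balls for $d$. For the \emph{compatibility} half you do essentially the same thing, only in more detail: you use the identity $d(\chi,\chi')=2(1-(\chi|\chi'))$ to translate between the depth of the common prefix of two geodesics (which is what a cone neighbourhood controls) and the metric distance, so the two neighbourhood bases match. Where you genuinely depart from the paper is on \emph{compactness}: the paper implicitly leans on the cited compactness of $\overline{T}$ in the cone topology, whereas you prove compactness directly from the metric, either by a K\"onig-type diagonal extraction using local finiteness, or by completeness plus total boundedness with the level-$n$ vertices as a $2^{-n}$-net. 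Your remark that $(T,d)$ is not proper (the whole tree has diameter at most $2$, so closed balls contain infinitely many vertices whenever $T$ is infinite) is a genuine and worthwhile point: the standard ``proper $CAT(0)$ space has compact visual boundary'' argument does not apply verbatim here, so a direct compactness proof of the kind you give is arguably needed to make the proposition airtight, and your version supplies what the paper's one-line proof leaves to the reader. Two small cautions: your total-boundedness argument depends on local finiteness of $T$, which is stated only in the paper's introduction and should be cited explicitly as a hypothesis; and the claim that every cone neighbourhood \emph{is} a metric ball is stronger than what you need or what is literally true --- mutual refinement of the two neighbourhood bases is the correct statement, and your compact-to-Hausdorff fallback already covers this.
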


    \begin{proof}
    The neighbourhood basis given in \cite{Bartels2008} coincides with a basis of balls in the metric $d$.
    \end{proof}

    \begin{proposition}
    The space $(\overline{T},d)$ is contractible.
    \end{proposition}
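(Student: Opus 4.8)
The plan is to contract $\overline{T}$ radially toward the basepoint $x_0$ along geodesics. First I would record that $\overline{T}$ is uniquely geodesic with respect to $x_0$: for every $p\in\overline{T}$ there is a unique geodesic $[x_0,p]$, of length $d(p):=d(p,x_0)\le 1$, parametrized by arc length. For $p\in T$ this is the tree segment, and for $p=\chi\in\partial T$ it is the geodesic ray issuing from $x_0$ toward $\chi$, which by the construction of $d$ on $\partial T$ has total length $d(\chi)=1$ and carries $\chi$ as its endpoint. For $t\in[0,1]$ let $H(p,t)$ be the point of $[x_0,p]$ at distance $a(p,t):=(1-t)\,d(p)$ from $x_0$. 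Then $H(p,0)=p$, $H(p,1)=x_0$, and $H(\cdot,t)$ moves each point the controlled amount $d(H(p,t),p)=t\,d(p)\le t$, so the only real issue is continuity of $H$ as a map $\overline{T}\times[0,1]\to\overline{T}$.

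Next I would convert the tree structure into a single distance formula. Because $(T,d)$ is $0$-hyperbolic (indeed an $\IR$-tree), the geodesics $[x_0,p]$ and $[x_0,q]$ agree on a common initial segment and branch at the point lying at distance $(p|q)$ from $x_0$, where $(p|q)$ is the Gromov product extended to $\overline{T}$. Checking the three positional cases (both radial points inside the common segment, both beyond the branch point, or one of each) yields the uniform identity
    \begin{equation*}
    d\bigl(H(p,t),H(q,s)\bigr)=a(p,t)+a(q,s)-2\min\{(p|q),\,a(p,t),\,a(q,s)\},
    \end{equation*}
which specializes at $t=s=0$ to $d(p,q)=d(p)+d(q)-2(p|q)$ and to $d(\chi,\chi')=2(1-(\chi|\chi'))$ on the boundary. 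The point of writing it this way is that every ingredient on the right is continuous: $d(\cdot)$ is $1$-Lipschitz, the metric is continuous, hence $(p|q)=\tfrac12(d(p)+d(q)-d(p,q))$ is jointly continuous on $\overline{T}\times\overline{T}$, and so is $a(p,t)=(1-t)\,d(p)$.

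Continuity of $H$ at a point $(p_0,t_0)$ then follows directly. As $(q,s)\to(p_0,t_0)$ we have $a(q,s)\to a(p_0,t_0)$ and $(p_0|q)\to (p_0|p_0)=d(p_0)$; since $a(p_0,t_0)=(1-t_0)\,d(p_0)\le d(p_0)$, the minimum in the formula tends to $a(p_0,t_0)$, and the whole right-hand side tends to $a(p_0,t_0)+a(p_0,t_0)-2a(p_0,t_0)=0$. Thus $H(q,s)\to H(p_0,t_0)$, $H$ is continuous, and it is a contraction of $\overline{T}$ onto $x_0$, establishing the claim.

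I expect the genuine obstacle to be continuity at the boundary $\partial T$ and along the \emph{seam} $t=0$ there: a priori the radial flow could behave badly as interior points approach an end, or as one end is approximated by another. The device that removes this difficulty is the uniform confluence formula above, together with the fact, inherited from $0$-hyperbolicity, that the extended Gromov product is jointly continuous on all of $\overline{T}$; once these are in hand the boundary needs no separate treatment. The one subsidiary point I would verify carefully is the identity $d(x_0,\text{branch point})=(p|q)$ when one or both of $p,q$ lie in $\partial T$, which is exactly the content of the construction of $d$ on $\overline{T}$ recalled before the statement.
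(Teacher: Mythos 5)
Your proof is correct, and it takes a genuinely different (and more self-contained) route than the paper. The paper disposes of contractibility in one line by citing the literature: every closed ball $\overline{B}(r,x_0)$, $r\in[0,2]$, is a subcontinuum of $\overline{T}$ and hence a tree, and the contraction is left implicit in the cited structure theory of such continua. You instead build the radial geodesic contraction explicitly and reduce its continuity to the single confluence identity
$d(H(p,t),H(q,s))=a(p,t)+a(q,s)-2\min\{(p|q),a(p,t),a(q,s)\}$,
verified by the three positional cases relative to the branch point; continuity then falls out of the joint continuity of the extended Gromov product, with the delicate boundary/seam cases handled uniformly rather than separately. The case analysis is right (note $(p|q)\le\min\{d(p),d(q)\}$ guarantees the branch point lies on both geodesics, and the degenerate cases $p=x_0$ or $q\in[x_0,p]$ are covered), and the consistency of the branch-point distance with the extended Gromov product on $\partial T$ is exactly the identity $d(\chi,\chi')=2(1-(\chi|\chi'))$ recalled in the paper. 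What your approach buys, beyond self-containedness, is the quantitative estimate $d(H(p,t),p)=t\,d(p)\le t$, i.e.\ explicit control on the tracks of the homotopy; this is precisely the kind of control the subsequent $1$-transfer argument needs, so your construction could replace both this proof and part of the final section's homotopy argument. What the paper's citation-based proof buys is brevity and generality (it applies verbatim to any subcontinuum of $\overline{T}$), at the cost of leaving the actual homotopy, and hence the diameter control used later, unexhibited.
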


    \begin{proof}
    In \cite{doi:10.1112/jlms/54.1.50} notation, each ball $\overline{B}(r,x_0)$ centered at $x_0$, with $r\in[0,2]$ is a sub-continuum and hence a tree. 
    \end{proof}
    
Finally, we construct the homotopy between $\overline{T}$ and a $1$-simplex $K$ and verify the control conditions. Let $\delta>0$, hence $1-\delta<1$ and because $\lim_{n\to\infty}(\sum_{i}^{n}1/2^i)=1$ there exists $N$ such that
    \begin{equation*}
        1-\delta\leq \sum_{i=1}^{N}\frac{1}{2^i}\leq 1
    \end{equation*}
denote $\sigma_N=\sum_{i=1}^{N+1}\frac{1}{2^i}$. Using the structure maps, necessaries for the construction of the cone topology as a direct limit, we have a map
    \begin{equation*}
        P_{\sigma_N}:\overline{T}\rightarrow\overline{B}(\sigma_N,x_0).
    \end{equation*}
If we consider $\overline{T}=K$ as $1$-simplex then we obtain
    \begin{equation*}
        \overline{T}\xrightarrow[]{i}K \xrightarrow[]{P_{\sigma_N}}\overline{T}
    \end{equation*}
which is a contraction of $K$ over the subtree $\tau=\overline{B}(\sigma_N,x_0)$. According to \cite{Paulowich1982} there is a homotopy $H$ between $\tau$ and $\overline{T}$ such that $H(x,0)=Id_{\overline{T}}$ and $H(x,1)=P_{\sigma_N}$.

Moreover, for each $x\in\tau$ we have $\{H(x,t)|t\in[0,1]\}=\{x\}$ and hence it has diameter $0$. For any $x\in\overline{T}\setminus\tau$ we have $H(x,0)=x$ and $H(x,1)=P_{\sigma_N}(x)$ as extreme points of the homotopy. Since $P_{\sigma_N}(x)\in\tau$ we have
    \begin{equation*}
        d(P_{\sigma_N,x},x)=1-\sigma_N
    \end{equation*}
and $1-\sigma_n\leq\delta$ we have $diam\{H(t,x)|t\in[0,1]\}\leq\delta$. Hence $\overline{T}$ is a $1$-transfer space.

\setcounter{section}{-1}

\bibliographystyle{plain}
\bibliography{bibliography}

\end{document}